\newcommand{\Fillingson}[1]{\mathcal{F}_{#1}}
\newcommand{\Fillingsonwith}[2]{\mathcal{F}_{#1,#2}}
\newcommand{\numof}[1]{\left| #1 \right|}
\newcommand{\Des}{\operatorname{Des}}
\newcommand{\Inv}{\operatorname{Inv}}
\newcommand{\maj}{\operatorname{maj}}
\newcommand{\inv}{\operatorname{inv}}
\newcommand{\leg}{\operatorname{leg}}
\newcommand{\arm}{\operatorname{arm}}
\newcommand{\Att}{\operatorname{Att}}
\newcommand{\denotes}{:=}
\newtheorem{thm}{Theorem}[section]
\newtheorem{lemma}[thm]{Lemma}
\newtheorem{theorem}[thm]{Theorem}
\newtheorem{proposition}[thm]{Proposition}
\newtheorem{algo}[thm]{Algorithm}
\newtheorem{remark}[thm]{Remark}
\newtheorem{example}[thm]{Example}
\newtheorem{definition}[thm]{Definition}
\title[Factorization formulas for Macdonald polynomials]{A bijective proof of a factorization formula for Macdonald polynomials at roots of unity}
\author[F. Descouens and H. Morita and Y. Numata]{F. Descouens, H. Morita and Y. Numata}
\address[F. Descouens]{Fields Institute, 222 College Street, Toronto, Ontario, M5T 3J1, Canada}
\email{fdescoue@fields.utoronto.ca}
\address[H. Morita]{Oyama National College of Technology, Nakakuki 771, Oyama, 323-0806, Japan}
\email{morita@oyama-ct.ac.jp}
\address[Y. Numata]{Department of Mathematics, Hokkaido University, Kita 10, Nishi 8, Kita-Ku, Sapporo, Hokkaido, 060-0810, Japan}
\email{nu@math.sci.hokudai.ac.jp}
\begin{document}
\maketitle
\begin{abstract}
We give a combinatorial proof of the factorization formula of modified
 Macdonald polynomials $\widetilde{H}_\lambda(X;q,t)$ when $t$ is
 specialized at a primitive root of unity. Our proof is restricted to
 the special case where $\lambda$ is a two columns partition. We mainly
 use the combinatorial interpretation of Haiman, Haglund and Loehr
 giving the expansion of $\widetilde{H}_\lambda(X;q,t)$ on the monomial
 basis. 
%
\end{abstract}
\section{Introduction}
The different versions of Macdonald polynomials have been intensively studied
from a combinatorial and algebraic approach since their introduction in
\cite{Macdo1}. These polynomials are deformations with two parameters of
usual symmetric functions and generalize the Hall-Littlewood
functions. We are mainly interested in the modified version of Macdonald
polynomials $\widetilde{H}_\lambda(X;q,t)$. In \cite{HHL}, Haglund,
Haiman and Loehr give a combinatorial interpretation of the expansion of
these modified Macdonald polynomials in the monomial basis. This
combinatorial interpretation is based on the definition of two
statistics $inv(T)$ and $maj(T)$ on the set $\mathcal{F}_{\mu, \nu}$ of
all the fillings $T$ of a given shape $\mu$ and evaluation $\nu$. Hence,
we have the following formula 
\begin{equation*}
\widetilde{H}_{\mu}(X;q,t) = \sum_{\nu} \left (\sum_{T\in \mathcal{F}_{\mu,\nu}} q^{\inv(T)} t^{\maj(T)} \right ) X^T \ .
\end{equation*} 
 In \cite{DM}, the authors give an algebraic proof of factorization formulas for these polynomials, when the parameter $t$ is specialized at primitive roots of unity. More precisely, for any positive integer $n$ and any partition $\mu$ such that $\mu=(\mu',n^l,\mu'')$, we have  
\begin{gather}\label{factorization}
\widetilde{H}_{\mu}(X;q,\zeta_l)
=\widetilde{H}_{(\mu',\mu'')}(X;q,\zeta_l)\cdot\widetilde{H}_{(n^l)}(X;q,\zeta_l)\ ,
\end{gather}
where $\zeta_l$ is an $l$-th primitive root of unity.
We propose to give a combinatorial proof of this formula in the special case where $\mu''=\varnothing$ and $n=1$ or $2$. 

\section{Combinatorial interpretation for Macdonald polynomials}\label{sec:def}
We mainly follow the notations of \cite{Macdo2} for symmetric functions. We recall the combinatorial interpretation of the expansion of modified Macdonald polynomials on the monomials basis given in \cite{HHL}.

A partition $\lambda$ is a sequence of positive integers
$(\lambda_1,\ldots, \lambda_n)$ such that $\lambda_1 \ge \ldots \ge
\lambda_n$. We represent such a partition by its Young diagram using the
French convention. For a given cell $u$ of $\lambda$, the arm of $u$,
denoted by $\arm(u)$, is the number of cells strictly to the right of
$u$. The leg of $u$, denoted by $\leg(u)$, is the number of cells strictly
above $u$.  
\begin{example} The partition $(4,3,2)$ can be represented by the following diagram
$$
\tableau[sbY]{ & \\\bullet & & \\ & & &} \quad
$$
For the cell $\bullet$, we have $\arm(\bullet)=2$ and $\leg(\bullet)=1$.
\end{example}

We call $T$ a filling of shape $\lambda$ 
if  $T$ is a tableau obtained by
assigning integer entries to the cells of the diagram of $\lambda$ with
no increasing conditions. 
The evaluation of a filling $T$ is the vector
where the $i$-th entry is the number of cells labeled by $i$ in $T$. 
The set of all the fillings of
shape $\lambda$ and evaluation $\mu$ is denoted by
$\mathcal{F}_{\lambda,\mu}$. 

A descent of a filling $T$ is a pair of cells satisfying the following
condition  
\begin{equation*}
T_{i+1,j}\ >\ T_{i,j} \ . 
\end{equation*}
For a given filling $T$, we define the set $\Des(T)$ of the descents of
$T$ by 
\begin{equation*}
\Des(T)  = \{ T_{i+1,j}\ \text{ such that }\ T_{i+1,j} > T_{i,j} \}\ .
\end{equation*}
 The statistic $\maj(T)$ is defined by
\begin{equation*}
\maj(T) = \sum_{u \in \Des(T)} (\leg(u)+1) \ . 
\end{equation*}

\begin{example}\label{filling} 
The following tableau is
a filling of shape $(4,3,2)$ and evaluation $(1,2,1,3,0,1,0,1)$:
$$
\tableau[sbY]{\mathbf{6}  & 2 \\ 2 & 4 & \mathbf{8} \\ 4 & 4 & 1 & 3}\quad .
$$
The descent set of this filling is $\Des(T)=\Set{(3,1),\ (2,3)}$. Hence,
\begin{equation*}
\maj(T) = 2\ .
\end{equation*}
\end{example} 
Two cells of a filling are said to attack each other if either 
\begin{enumerate}
\item they are in the same row, or 
\item they are in consecutive rows, 
      with the cell in the upper row strictly to the right of the one in
      the lower row.  
\end{enumerate}
\begin{example}
The following picture shows the two kinds of attacking cells:
\begin{equation*} 
\tableau[sbY]{ & \\ \bullet & & \bullet\\ & & &} \quad \quad \quad \quad 
\tableau[sbY]{ & \\ & & \bullet \\\bullet & & &  }\quad .
\end{equation*}
\end{example}
The reading order of a filling is the row by row reading from top to
bottom and left to right within each row. 
A pair $(u,v)$ of cells is an inversion 
if they satisfy the three following conditions: 
\begin{enumerate}
\item they are attacking each others,
\item $T_{u} < T_{v}$, and
\item the cell $v$ appears before the cell $u$ in the
      reading order. 
\end{enumerate}  
The number of inversions of $T$ is denoted by $\Inv(T)$. 
The statistic $\inv(T)$ is defined by 
\begin{equation*}
\inv(T) = \Inv(T) - \sum_{u\in \Des(T)} \arm(u)\ .
\end{equation*}
\begin{example} 
For the filling $T$ of Example \ref{filling}, we have
 $\inv(T)=8-1=7$.  
\end{example}
\begin{theorem}[\cite{HHL}]
The modified Macdonald polynomial $\widetilde{H}_\mu(x;q,t)$ 
has a description as the following weighted generating function over fillings of shape $\mu$
\begin{gather*}
 \widetilde{H}_\mu(X;q,t)  \denotes \sum_{\nu}\sum_{T\in \mathcal{F_{\mu,\nu}}}
q^{\inv(T)} t^{\maj(T)} X^T\ ,
\end{gather*}
where the sum is over all the compositions $\nu$ of size $\vert \mu \vert.$
\end{theorem}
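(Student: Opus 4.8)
The plan is to verify that the right-hand side, which I denote
\[
\widetilde{C}_\mu(X;q,t)\denotes\sum_{\nu}\sum_{T\in\mathcal{F}_{\mu,\nu}}q^{\inv(T)}t^{\maj(T)}X^T,
\]
coincides with $\widetilde{H}_\mu$ by checking that $\widetilde{C}_\mu$ satisfies the three conditions that characterize the modified Macdonald polynomials uniquely among symmetric functions: $\widetilde{C}_\mu$ is symmetric, its images under the plethystic substitutions obey the triangularity
\[
\widetilde{C}_\mu[X(1-q);q,t]\in\operatorname{span}_{\mathbb{Q}(q,t)}\{s_\lambda:\lambda\unrhd\mu\},\qquad
\widetilde{C}_\mu[X(1-t);q,t]\in\operatorname{span}_{\mathbb{Q}(q,t)}\{s_\lambda:\lambda\unrhd\mu'\},
\]
and the normalization $\langle\widetilde{C}_\mu,s_{(|\mu|)}\rangle=1$ holds. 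Since these conditions determine $\widetilde{H}_\mu$, establishing them for $\widetilde{C}_\mu$ finishes the proof.

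First I would prove that $\widetilde{C}_\mu$ is a symmetric function. The cleanest route is an explicit weight-preserving involution on fillings that interchanges the multiplicities of the consecutive values $i$ and $i+1$ while preserving both $\inv(T)$ and $\maj(T)$ (a Bender--Knuth-type construction adapted to these two statistics), so that the coefficient of every monomial is invariant under the symmetric group; equivalently, grouping the fillings by their descent set fixes $\maj(T)$ as well as the correction $\sum_{u\in\Des(T)}\arm(u)$, and the resulting inner $q$-generating function can be recognized as a symmetric LLT polynomial. The normalization is immediate: $\langle\widetilde{C}_\mu,s_{(|\mu|)}\rangle$ equals the coefficient of $x_1^{|\mu|}$, which comes only from the unique filling whose cells are all labelled $1$; this filling has no descents and no inversions, so it contributes $q^0t^0=1$.

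The crux is the pair of triangularity conditions, and this is where I expect the real difficulty. After applying the substitution $X\mapsto X(1-q)$, the monomial expansion of $\widetilde{C}_\mu$ becomes a signed generating function over enriched fillings in which each entry carries one of two colors, recording the two terms of a factor $(1-q)$. I would construct a sign-reversing, weight-preserving involution on the non-fixed enriched fillings so that the surviving fixed points contribute only Schur functions $s_\lambda$ with $\lambda\unrhd\mu$; interchanging the roles of $q$ and $t$ and of $\mu$ and $\mu'$ then yields the second condition. The main obstacle is designing this involution so that it respects $\inv$ and $\maj$ at the same time: the two statistics are coupled through the reading order and through the arm/leg corrections, so the cancellation must be arranged to preserve the balance between $\Inv(T)$ and $\sum_{u\in\Des(T)}\arm(u)$ while controlling the dominance order of the shape. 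Once symmetry, normalization, and both triangularity conditions are in hand, uniqueness of the characterization gives $\widetilde{C}_\mu=\widetilde{H}_\mu$.
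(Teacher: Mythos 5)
The paper does not prove this statement: it is imported verbatim from \cite{HHL} and serves as the starting point for everything else, so there is no internal proof to compare against. Your outline is, in architecture, exactly the proof given in \cite{HHL}: establish that the combinatorial sum is symmetric, verify the two plethystic triangularity conditions and the normalization, and conclude by the uniqueness of the characterization of $\widetilde{H}_\mu$. The normalization step as you state it is complete and correct.

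As a proof, however, the proposal has genuine gaps, because the two substantive steps are announced rather than carried out. For symmetry, no Bender--Knuth-type involution preserving both $\inv$ and $\maj$ simultaneously is known to work directly; the argument in \cite{HHL} instead fixes the descent set, recognizes the resulting inner $q$-generating function as a power of $q$ times an LLT polynomial of ribbon shapes, and appeals to the independently established symmetry of LLT polynomials --- you mention this alternative, but it is the alternative that must actually be executed, and it requires a nontrivial identification of $\Inv(T)-\sum_{u\in\Des(T)}\arm(u)$ with the LLT inversion statistic. For the triangularity conditions, the sign-reversing involution on two-colored (``super'') fillings is the heart of the HHL proof and by far its most delicate part; in \cite{HHL} it is routed through a quasisymmetric-function (superization) expansion rather than a direct involution on monomials, and writing that you ``would construct'' such an involution does not establish the condition. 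Finally, check the orientation of your triangularity statements: in the standard characterization the substitution $X\mapsto X(q-1)$ gives triangularity with respect to partitions dominating $\mu'$ and $X\mapsto X(t-1)$ with respect to those dominating $\mu$, so the roles of $\mu$ and $\mu'$ in your two displayed conditions appear to be interchanged relative to the usual convention.
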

\begin{theorem}[\cite{DM}]\label{ThDM}
For any positive integer $n$ and any partition $\mu$ such that $\mu=(\mu',n^l,\mu'')$, we have  
\begin{gather*}
\widetilde{H}_{\mu}(X;q,\zeta_l)
=\widetilde{H}_{(\mu',\mu'')}(X;q,\zeta_l)\cdot\widetilde{H}_{(n^l)}(X;q,\zeta_l)\ ,
\end{gather*}
where $\zeta_l$ is an $l$-th primitive root of unity.
\end{theorem}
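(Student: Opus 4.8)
The plan is to prove the identity directly from the Haglund--Haiman--Loehr formula stated above, keeping $n\ge 1$ and the partitions $\mu',\mu''$ completely arbitrary (subject only to $\mu=(\mu',n^l,\mu'')$ being a partition). After setting $t=\zeta_l$, the left-hand side is $\sum_\nu\bigl(\sum_{T\in\mathcal{F}_{\mu,\nu}}q^{\inv(T)}\zeta_l^{\maj(T)}\bigr)X^T$. The right-hand side is a product of two such generating functions, hence is itself a generating function over pairs $(S,R)$, where $S$ is a filling of shape $(\mu',\mu'')$ and $R$ a filling of the rectangle $(n^l)$; the weights $q^{\inv}$, the monomial $X^{(\cdot)}$, and the exponent of $\zeta_l$ all add over the two factors, and the evaluations add as $\nu=\alpha+\beta$. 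Thus it suffices to build a content-preserving correspondence between $\mathcal{F}_{\mu,\nu}$ and $\coprod_{\alpha+\beta=\nu}\mathcal{F}_{(\mu',\mu''),\alpha}\times\mathcal{F}_{(n^l),\beta}$ that preserves $q^{\inv}$ exactly and preserves $\zeta_l^{\maj}$ only modulo the relation $\zeta_l^{\,l}=1$.

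The map should \emph{peel} the $l$ middle rows of length $n$ out of $\mu$ to produce $R$, leaving $S$. I would first decompose both statistics across this cut. Since $\Des$ is read columnwise and $\inv$ only involves attacks within a row and between consecutive rows, $\maj$ and $\Inv$ split into a part internal to the extracted rectangle, a part internal to $(\mu',\mu'')$, and a boundary part along the seam. The clean half of the argument is the $\maj$-cancellation: deleting the $l$ rows lowers by exactly $l$ the leg of every cell lying above the block in the first $n$ columns, and because $\maj$ depends linearly on $\leg$ while $\zeta_l^{\,l}=1$, each such descent contributes the same power of $\zeta_l$ before and after peeling. What remains is to reinsert the rectangle by a local, column-by-column rule that restores the seam descents and the inversion pairs they carry so that $q^{\inv}$ is matched on the nose.

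The main obstacle is precisely this last point: making one uniform reinsertion rule preserve $\inv$ exactly while preserving $\maj$ modulo $l$, for every width $n$ and all $\mu',\mu''$. The $\maj$ side is forgiving thanks to the linearity in $\leg$ and $\zeta_l^{\,l}=1$, but $\inv(T)=\Inv(T)-\sum_{u\in\Des(T)}\arm(u)$ couples the genuine seam inversions with the arm-correction, and controlling the two together is where the combinatorics resists a general construction. As an algebraic safeguard I would instead invoke the Garsia--Haiman--Tesler characterization of $\widetilde H_\mu$ by the two triangularity conditions on $\widetilde H_\mu[X(q-1)]$ and $\widetilde H_\mu[X(t-1)]$ plus the normalization on the coefficient of $s_{(|\mu|)}$. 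Because plethysm by $X(q-1)$ and by $X(t-1)$ are ring maps, the $q$-side condition passes to the product for free: by the Littlewood--Richardson rule and the monotonicity of dominance under the union of partitions, together with $(\mu',\mu'')\cup(n^l)=\mu$, every Schur function occurring in the transformed product is indexed by $\lambda\trianglerighteq\mu$. The $t$-side is exactly where $\zeta_l$ becomes indispensable: there the target partition is $(\mu',\mu'')'+(l^n)=\mu'$, a superposition that the generic bound cannot reach, and only the degeneracy $p_{kl}\mapsto 0$ forced by $\zeta_l^{\,l}=1$ collapses the transformed rectangle $\widetilde H_{(n^l)}[X(t-1);q,\zeta_l]$ enough to supply it. The subtlety I would have to confront on this route is that the characterization is unique only over $\mathbb{Q}(q,t)$ and degenerates at $t=\zeta_l$, so the argument must be upgraded to a congruence for the $(q,t)$-Kostka coefficients modulo the cyclotomic polynomial $\Phi_l(t)$.
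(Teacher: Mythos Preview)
Your proposal is a plan rather than a proof: you correctly identify the natural ``peeling'' decomposition (which is exactly the paper's map $\pi^\ast$) and you correctly isolate the difficulty at the seam, but you do not actually construct the correction that makes $\inv$ match on the nose, nor do you close the algebraic argument at $t=\zeta_l$. Both gaps are the genuine content of the problem, and you name them yourself.

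On the combinatorial side, the paper does \emph{not} prove Theorem~\ref{ThDM} in full generality either; it cites the general statement from \cite{DM} and gives a bijective proof only when $\mu''=\varnothing$ and $n\in\{1,2\}$. The peeling map $\pi^\ast$ alone is insufficient for $n\geq 2$ precisely for the reason you diagnose: the arm-correction in $\inv(T)=\Inv(T)-\sum_{u\in\Des(T)}\arm(u)$ interacts with the seam. The paper's remedy is a second bijection $\tau$ on $\mathcal F_{\mu,\nu}$, an involution that swaps the two entries of certain rows of the tail $(2^l)$ according to explicit local conditions (called $xAx$ and $xXxX$), so that $\inv(\tau(T))=\inv(\pi^\ast(T))$ exactly and $\maj(\tau(T))\equiv\maj(\pi^\ast(T))\pmod{l}$. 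The existence and well-definedness of $\tau$ is established by a case analysis (Lemmas~\ref{lemma:casexAx}--\ref{lemma:casexXxX}); this is the missing idea in your first route, and the paper does not know how to extend it beyond $n=2$.

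On the algebraic side, your sketch via the triangularity characterization is in the spirit of \cite{DM}, but the obstacle you flag is real: the uniqueness clause fails over $\mathbb Q(q)[t]/(\Phi_l(t))$, so verifying the two triangularities for the product does not by itself force equality with $\widetilde H_\mu$. One must either prove a cyclotomic congruence for the $(q,t)$-Kostka coefficients directly, or work with an operator identity (e.g.\ a plethystic or vertex-operator formulation) that survives the specialization; you have not done either, so this route is also incomplete as written.
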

We now give some technical definitions on fillings,
which are needed later in our proof. 
\begin{definition}
For any partition $\mu$, we define 
the sets
$\Att_{i}(\mu)$ and
$\Att_{i,i-1}(\mu)$ 
of pairs of boxes of $\mu$ by
\begin{align*}
\Att_{i}(\mu)   & \denotes \Set{((i,j),(i,k))|1\leq j < k \leq \mu_i}\ , \\
\Att_{i,i-1}(\mu)   & \denotes \Set{((i,j),(i-1,k))|1\leq j < k \leq \mu_i}.
\end{align*}
The union of these two sets gives us the attacking cells of $\mu$ coming from its $i$-th row.
\end{definition}
\begin{definition}
For a filling $T$ of shape $\mu$, 
we define the set $\Des_{i,i-1}(T)$ of pairs of boxes of $\mu$ by
\begin{align*}
\Des_{i,i-1}(T) & \denotes \Set{(i,j)\in\mu | T_{i,j} > T_{i-1,j}}\ .
\end{align*}
This set is the restriction of the descents set $\Des(T)$ to the
 descents which occurs in the $i$-th row of $\mu$. 
Let us now define the
 following restrictions of the quantities $\sum_{u\in \Des(T)}
 \arm(u)$ and $\sum_{u\in \Des(T)} \leg(u) $ 
\begin{align*}
\arm_{i,i-1}(T) & \denotes  \sum_{b\in \Des_{i,i-1}(T)} \arm(b)\ ,\\
\maj_{i,i-1}(T) & \denotes  \sum_{b\in \Des_{i,i-1}(T)} (1+\leg(b))\ .
\end{align*}
\end{definition}
\begin{example}\label{example:notation}
For the following filling $T$  
\begin{equation*}
\young(1,47,32,56)\ ,
\end{equation*}
the set $\Des_{3,2}(T)=\Set{(3,1),(3,2)}$ consists of
the boxes where $4$ and $7$ lie. The sets $\Des_{2,1}(T)$ and $\Des_{4,3}(T)$ are reduced to the empty set. Hence we have
\begin{align*}
\arm_{3,2}(T) &= 1+0=1,&
\arm_{2,1}(T) &= \arm_{4,3}(T)=0,\\
\maj_{3,2}(T) &= 2+1=3, &
\maj_{2,1}(T) &= \maj_{4,3}(T)=0.
\end{align*}
\end{example}
\begin{definition}
We define the subset $\Inv_{i}(T)$ (resp. $\Inv_{i,i-1}(T)$) of
 $\Att_{i}(T)$ (resp. $\Att_{i,i-1}(T)$) by
\begin{align*}
\Inv_{i}(T)     & \denotes  \Set{(b,c)\in \Att_{i}(\mu)|T_b > T_c}\ , \\
\Inv_{i,i-1}(T) & \denotes  \Set{(b,c)\in \Att_{i,i-1}(\mu)|T_b > T_c}\ .
\end{align*}
The union of these sets gives us the inversions of $T$ which are coming from the $i$-th row of $\mu$.
 Let now define the corresponding restriction of the statistic $\inv(T)$ by
\begin{align*}
\inv_{i,i-1}(T)     & \denotes \numof{\Inv_{i}(T)}+\numof{\Inv_{i,i-1}(T)} -\arm_{i,i-1}(T).
\end{align*}
\end{definition}
\begin{example}\label{example:natation2}
For the filling $T$ of Example \ref{example:notation}, we have
\begin{align*}
\Inv_{2}(T)&=\Set{ (2,1),(2,2) }, &
\Inv_{1}(T)&=\Inv_{3}(T)=\Inv_{4}(T)=\emptyset, \\
\Inv_{3,2}(T)&=\Set{ (2,1),(3,2)}, &
\Inv_{2,1}(T)&=\Inv_{4,3}(T)=\emptyset\ .
\end{align*}
Hence we have 
\begin{align*}
\inv_{2,1}(T) & =  1+0-0=1,\\ 
\inv_{3,2}(T) & =  0+1-1=0,\\
\inv_{4,3}(T) & =  0+0-0=0.
\end{align*}
\end{example}
Using all these restrictions, we can express the statistics $\maj(T)$ and $\inv(T)$  by
\begin{align*}
\maj(T) \denotes \sum_{i=2}^{l(\mu)} \maj_{i,i-1}(T) 
\quad \text{and} \quad 
\inv(T) \denotes \numof{\Inv_{1}(T)} + \sum_{i=2}^{l(\mu)} \inv_{i,i-1}(T).
\end{align*}
\begin{example}
Let $T$ be the filling of Example $\ref{example:notation}$. From computations of Examples
 $\ref{example:notation}$ and  $\ref{example:natation2}$, we obtain the following statistics
\begin{align*}
\maj(T)= 0+3+0=3
\quad \text{and} \quad
\inv(T)= 0+(1+0+0)=1.
\end{align*}
\end{example}
\section{Main results}
For two compositions
$\nu'=(\nu'_1,\ldots,\nu'_k)$ and
$\nu''=(\nu''_1,\ldots,\nu''_k)$, 
$\nu'+\nu''$ denotes 
the composition $(\nu'_1+\nu''_1,\ldots,\nu'_k+\nu''_k)$.
Let $\mu$ be a partition such that $\mu=(\mu^\prime, n^l, \mu'')$ such
that $\mu'_{l(\mu')}\geq n$ and $\mu''_{1}\leq n$. 
In order to prove combinatorially Theorem \ref{ThDM}, 
we have to define two bijections $\tau$ and $\pi^{\ast}$ between
different sets of fillings   
\begin{align*}
\left \{
\begin{array}{cccl}
\tau\colon      & \mathcal{F}_{\mu,\nu} & \longrightarrow & \mathcal{F}_{\mu,\nu}\ ,\\
\pi^{\ast}\colon& \mathcal{F}_{\mu,\nu} & \longrightarrow & \bigcup_{\nu=\nu'+\nu''}
\mathcal{F}_{(\mu',\mu''),\nu'} \times \mathcal{F}_{(n^l),\nu''}\ ,
\end{array}\right .
\end{align*}
with
\begin{align*}
\left \lbrace
\begin{array}{ccl}
\maj(\tau(T)) & \equiv &  \maj(\pi^{\ast}(T)) \pmod{l}\ ,\\
\inv(\tau(T)) &     =  & \inv(\pi^{\ast}(T)) \ .
\end{array}\right .
\end{align*}
The definition of the statistics $maj$ and $inv$ are extended on couples of fillings by 
\begin{align*}
\left \lbrace
\begin{array}{ccl}
\maj(\pi^{\ast}(T)) & := &  \maj(\pi^{\ast}(T)_{1}) +\maj(\pi^{\ast}(T)_{2})\ , \\
\inv(\pi^{\ast}(T)) & := & \inv(\pi^{\ast}(T)_{1}) +\inv(\pi^{\ast}(T)_{2})\ .
\end{array}\right .
\end{align*}
We restrict ourselves to the case $n=1$ or $2$ and Young diagrams $\mu$
with tails,  
i.e.,
$$\mu=(\mu', n^l) \quad \text{and} \quad \mu^\prime_{l(\mu')} \geq n\ .$$
Hence, the factorization formula (\ref{factorization}) becomes  
\begin{equation*}
\widetilde{H}_{(\mu',n^l)}(X;q,\zeta_l)=\widetilde{H}_{\mu'}(X;q,\zeta_l)\cdot\widetilde{H}_{(n^l)}(X;q,\zeta_l)\ .
\end{equation*}
For the factorization in the case when $n=1$ or $2$, 
we give a bijective proofs in Theorems \ref{thm:n=1} and \ref{thm:n=2},
and the proofs are detailed in Section \ref{sec:proofn=2}.

Let 
$\pi\colon \mu' \cup (n^l) \to \mu=(\mu',n^l)$  
be the natural bijection, i.e., 
\begin{align*}
\begin{cases}
\pi(i,j)  =  (i,j) &\text{if $(i,j)\in \mu'$,} \\
\pi(i,j)  =  (i+l(\mu'),j) &\text{if $(i,j) \in (n^l)$.}
\end{cases}
\end{align*}
The map $\pi$ on partitions induces the following bijection on fillings
\begin{equation}\label{def:pistar}
\pi^{\ast}\colon \mathcal{F}_{\mu,\nu} 
\longrightarrow
\bigcup_{\nu=\nu'+\nu''} \mathcal{F}_{\mu',\nu'} \times \mathcal{F}_{(n^l),\nu''}\ ,
\end{equation}
defined for all $T$ in $\mathcal{F}_{\mu,\nu}$ by
$$
\begin{cases}
(\pi^{\ast}(T))_1  =  (T_{i,j})_{(i,j)\in \mu'}\ , \\
(\pi^{\ast}(T))_2  =  (T_{i+l(\mu'),j})_{(i,j) \in (n^l)}\ .
\end{cases}
$$
\begin{proposition}
For a filling $T$ of shape $\mu$,
let $(T', T'')$ be an element of $\Fillingson{\mu',\nu'} \times \Fillingson{(n^l),\nu''}$
satisfying the condition  $\pi^{\ast}(T) = (T', T'')$.
Then 
\begin{align*}
\pi^{\ast-1}(\Des_{i+1,i}(T))&=\Des_{i+1,i}(T'),     &
\pi^{\ast-1}(\Des_{k+i+1,k+i}(T))&=\Des_{i+1,i}(T''),\\  
\pi^{\ast-1}(\Inv_{i+1,i}(T))&=\Inv_{i+1,i}(T'),     &
\pi^{\ast-1}(\Inv_{k+i+1,k+i}(T))&=\Inv_{i+1,i}(T''),\\  
\pi^{\ast-1}(\Inv_{i}(T))&=\Inv_{i}(T'),         &
\pi^{\ast-1}(\Inv_{k+i}(T))&=\Inv_{i}(T'')\ .
\end{align*}
Hence we have the following equations
\begin{align*}
\maj_{i,i-1}(T)&=\maj_{i,i-1}(T') + l\cdot \numof{\Des_{i,i-1}(T')},&
\maj_{k+i,k+i-1}(T)&=\maj_{i,i-1}(T''), \\
\inv_{i,i-1}(T)&=\inv_{i,i-1}(T') ,     &
\inv_{k+i,k+i-1}(T)&=\inv_{i,i-1}(T'')\ .
\end{align*}
This implies the following expression for $\maj(T)$ and $\inv(T)$
\begin{equation*}
\left \{
\begin{array}{ccc}
\maj(T)
&\equiv & \maj(T')+\maj(T'')+\maj_{l(\mu')+1,l(\mu')}(T) \pmod{l}\ ,\\
\inv(T) &= & \inv(T')+\inv(T'')+\numof{\Inv_{l(\mu')+1,l(\mu')}}-\arm_{l(\mu')+1,l(\mu')}(T)\ .
\end{array}\right .
\end{equation*}
\end{proposition}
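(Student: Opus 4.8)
The plan is to read $\pi^{\ast}$ as the purely horizontal cut that separates $T$ into its bottom $k=l(\mu')$ rows $T'$ (the $\mu'$-part) and its top $l$ rows $T''$ (the $(n^l)$-part, reindexed so that row $k+i$ of $\mu$ becomes row $i$ of $(n^l)$). Every identity in the statement then follows from the \emph{locality} of the three statistics $\Des_{i,i-1}$, $\Inv_i$, $\Inv_{i,i-1}$ together with two geometric observations about how the cut affects arms and legs. Since the cut keeps every row intact, (i) the arm of each cell is unchanged: $\arm_\mu(b)$ equals $\arm_{\mu'}(b)$ or $\arm_{(n^l)}(b)$ according to the block of $b$. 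The legs behave differently: (ii) for a cell $b=(i,j)$ of the $\mu'$-part one has $\leg_\mu(b)=\leg_{\mu'}(b)+l$ if $j\le n$ and $\leg_\mu(b)=\leg_{\mu'}(b)$ if $j>n$, since exactly the first $n$ columns are prolonged upward by the $l$ rows of the $(n^l)$-block, while for a cell of the $(n^l)$-part the leg is unchanged, $\leg_\mu(b)=\leg_{(n^l)}(b)$.

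First I would prove the three set correspondences. For an index strictly inside one block, the two cells $(i,j),(i-1,j)$ defining a descent, the attacking pairs inside row $i$, and the attacking pairs between rows $i$ and $i-1$ all lie in that single block; under $\pi$ they are carried to cells with the same relative coordinates, and the entries of $T$ are copied verbatim. Hence each defining inequality---$T_{i,j}>T_{i-1,j}$ for descents, $T_b>T_c$ on an attacking pair for inversions---holds for $T$ exactly when it holds for the matching cells of $T'$ or $T''$, giving
\[
\pi^{\ast-1}(\Des_{i+1,i}(T))=\Des_{i+1,i}(T'),\qquad \pi^{\ast-1}(\Des_{k+i+1,k+i}(T))=\Des_{i+1,i}(T''),
\]
and the analogous equalities for $\Inv_i$ and $\Inv_{i+1,i}$. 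The only adjacency with no counterpart in either piece is the seam between row $k$ (top of $\mu'$) and row $k+1$ (bottom of $(n^l)$); this is precisely why the indices $k+1,k$ are absent from the list and reappear later as isolated terms.

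Next I would turn the set correspondences into the per-row statistic identities. By (i) the arms are untouched, so $\arm_{i,i-1}(T)=\arm_{i,i-1}(T')$ and $\arm_{k+i,k+i-1}(T)=\arm_{i,i-1}(T'')$, and combined with the inversion correspondences this yields the \emph{exact} equalities $\inv_{i,i-1}(T)=\inv_{i,i-1}(T')$ and $\inv_{k+i,k+i-1}(T)=\inv_{i,i-1}(T'')$. For $\maj$ I would invoke (ii): in the $(n^l)$-block legs are unchanged, so $\maj_{k+i,k+i-1}(T)=\maj_{i,i-1}(T'')$ holds exactly, while in the $\mu'$-block every descent cell sitting in one of the first $n$ columns gains $l$ in its leg, so $\maj_{i,i-1}(T)$ exceeds $\maj_{i,i-1}(T')$ by $l$ times the number of such cells. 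In particular
\[
\maj_{i,i-1}(T)\equiv\maj_{i,i-1}(T')\pmod l ,
\]
which is the form of the row identity the final congruence needs; the clean value $l\cdot\numof{\Des_{i,i-1}(T')}$ is attained exactly when no descent of $T'$ lies beyond the $n$-th column.

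Finally I would assemble the totals from $\maj(T)=\sum_{i=2}^{k+l}\maj_{i,i-1}(T)$ and $\inv(T)=\numof{\Inv_1(T)}+\sum_{i=2}^{k+l}\inv_{i,i-1}(T)$ by splitting each sum into the $\mu'$-range $2\le i\le k$, the seam $i=k+1$, and the $(n^l)$-range $k+2\le i\le k+l$. On the $\mu'$-side, $\numof{\Inv_1(T)}=\numof{\Inv_1(T')}$ together with the range $2\le i\le k$ rebuilds $\inv(T')$ exactly and $\maj(T')$ up to a multiple of $l$; the $(n^l)$-range rebuilds $\maj(T'')$ and the partial sum $\sum_{i\ge2}\inv_{i,i-1}(T'')$. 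The step I expect to be most delicate is the seam: by definition $\inv_{k+1,k}(T)=\numof{\Inv_{k+1}(T)}+\numof{\Inv_{k+1,k}(T)}-\arm_{k+1,k}(T)$, and the within-row piece $\numof{\Inv_{k+1}(T)}$ equals $\numof{\Inv_1(T'')}$, which is exactly the summand missing from $\sum_{i\ge2}\inv_{i,i-1}(T'')$; it must therefore be peeled off the seam and recombined with the $(n^l)$-range to reconstitute the full $\inv(T'')$, after which only $\numof{\Inv_{k+1,k}(T)}-\arm_{k+1,k}(T)$ survives. Granting this bookkeeping, the leg shift of (ii) forces the $\maj$ total to be a congruence and the arm invariance of (i) keeps the $\inv$ total exact:
\[
\maj(T)\equiv\maj(T')+\maj(T'')+\maj_{k+1,k}(T)\pmod l ,
\]
\[
\inv(T)=\inv(T')+\inv(T'')+\numof{\Inv_{k+1,k}(T)}-\arm_{k+1,k}(T) .
\]
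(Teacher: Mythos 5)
Your proof is correct, and it is essentially the verification the paper leaves implicit: the proposition is stated without proof, and the intended argument is exactly your locality observation (rows are preserved by the cut, so arms are unchanged and all within-block descent/attacking data transfer verbatim) together with the bookkeeping at the seam $i=k+1$, where $\numof{\Inv_{k+1}(T)}=\numof{\Inv_{1}(T'')}$ is peeled off $\inv_{k+1,k}(T)$ to complete $\inv(T'')$. One point you raise deserves emphasis because you are right and the printed statement is not: the displayed identity $\maj_{i,i-1}(T)=\maj_{i,i-1}(T')+l\cdot\numof{\Des_{i,i-1}(T')}$ holds only when every descent of row $i$ of $T'$ lies in the first $n$ columns; in general the correct multiplier is the number of descent cells in columns $\leq n$, since only those columns are prolonged by the $(n^l)$-block. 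The paper's own example with $\mu=(2,2,1,1,1)$, $l=3$ already shows this: there $\maj_{2,1}(T)=4+1=5$ while $\maj_{2,1}(T')+l\cdot\numof{\Des_{2,1}(T')}=2+3\cdot 2=8$, because the descent in column $2$ gains no leg. As you observe, the discrepancy is always a multiple of $l$, so the congruence $\maj_{i,i-1}(T)\equiv\maj_{i,i-1}(T')\pmod{l}$ survives, and that is all the final formula (and the rest of the paper) uses; the exact equalities for $\inv$ are unaffected since arms are genuinely invariant. So your proof both establishes the proposition in the form actually needed and corrects a minor overstatement in it.
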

\subsection{The case n=1}
Let $\mu$ be a partition of the form $\mu=(\mu'_1,\ldots,\mu'_k,1^l)$. In this special case, we have 
\begin{equation*}
\Att_{k+1,k}=\emptyset \quad \text{and} \quad \Inv_{k+1,k}(T)=\emptyset \ .
\end{equation*}
The cell $b=(k+1,1)\in \mu$ is the unique candidate for being an element of $\Des_{k+1,k}(T)$. Hence
\begin{equation*}
\arm(b)=0\ .
\end{equation*}
Consequently $\arm_{k+1,k}(T)=0$ and $\inv(T)=\inv(\pi(T))$.

Since $\maj_{k+1,k}(T)=l\numof{\Des_{k+1,k}(T)}$ and
$\maj_{k+1,k}(T)\equiv 0 \pmod{l}$,
we have 
\begin{equation*}
\maj(T) \equiv \maj(\pi(T)) \pmod{l}\ .
\end{equation*}
Hence, we can take the identity map $id$ for $\tau$ in order to obtain a combinatorial proof of Theorem \ref{ThDM} in the case $n=1$.

\begin{theorem}\label{thm:n=1}
For a partition $\mu=(\mu'_1,\ldots,\mu'_k, 1^l)$,
let 
$\pi\colon \mu' \cup (1^l) \to \mu$ be the natural bijection and 
$\pi^{\ast}\colon \Fillingsonwith{\mu}{\nu} \to \bigcup\Fillingsonwith{\mu'}{\nu'}\times \Fillingsonwith{(1^l)}{\nu''}$ 
be the bijection induced by $\pi$ as defined in (\ref{def:pistar}). Let
$\tau$ be the identity map on $\Fillingsonwith{\mu}{\nu}$.
Then $\pi^{\ast}$ and $\tau$ satisfy
\begin{align*}
\left \lbrace
\begin{array}{ccl}
\maj(\tau(T)) & \equiv & \maj(\pi^{\ast}(T)) \pmod{l}\ ,\\
\inv(\tau(T)) &     =  & \inv(\pi^{\ast}(T)) \ .
\end{array}\right .
\end{align*}
\end{theorem}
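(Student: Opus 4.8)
The plan is to deduce the statement almost immediately from the decomposition of $\maj(T)$ and $\inv(T)$ recorded in the Proposition above, and then to verify that the single ``interface'' contribution coming from the boundary between $\mu'$ and $(1^l)$ is harmless. Since $\tau$ is the identity, it suffices to compare the statistics of $T$ with those of $\pi^{\ast}(T)=(T',T'')$. Writing $k=l(\mu')$, the Proposition gives
\begin{align*}
\maj(T) &\equiv \maj(T') + \maj(T'') + \maj_{k+1,k}(T) \pmod{l},\\
\inv(T) &= \inv(T') + \inv(T'') + \numof{\Inv_{k+1,k}(T)} - \arm_{k+1,k}(T),
\end{align*}
so everything reduces to analysing row $k+1$, the unique single-box row sitting immediately above $\mu'$.

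For the $\inv$ identity I would use that, when $n=1$, this interface row contains exactly one box. Both attacking sets $\Att_{k+1}(\mu)$ and $\Att_{k+1,k}(\mu)$ are then empty, since each requires a pair of columns $j<m$ with $m\le\mu_{k+1}=1$; in particular $\numof{\Inv_{k+1,k}(T)}=0$. Moreover the only possible element of $\Des_{k+1,k}(T)$ is $b=(k+1,1)$, and being the rightmost (indeed only) box of its row it has $\arm(b)=0$, whence $\arm_{k+1,k}(T)=0$. Substituting both into the displayed formula yields $\inv(T)=\inv(T')+\inv(T'')=\inv(\pi^{\ast}(T))=\inv(\tau(T))$.

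For the $\maj$ congruence the same box $b=(k+1,1)$ is the only candidate for $\Des_{k+1,k}(T)$, and its leg counts the boxes strictly above it in the first column, namely the remaining $l-1$ single boxes of $(1^l)$; thus $\leg(b)=l-1$ and $1+\leg(b)=l$. Hence $\maj_{k+1,k}(T)=l\,\numof{\Des_{k+1,k}(T)}\equiv 0\pmod{l}$. Combining this with the fact (already recorded in the Proposition) that each interior term $\maj_{i,i-1}(T)$ for $2\le i\le k$ differs from $\maj_{i,i-1}(T')$ only by the multiple $l\,\numof{\Des_{i,i-1}(T')}$, which vanishes modulo $l$, I obtain $\maj(T)\equiv\maj(T')+\maj(T'')=\maj(\pi^{\ast}(T))=\maj(\tau(T))\pmod{l}$.

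Since $\pi^{\ast}$ is already a bijection and $\tau=\mathrm{id}$ is trivially one, this finishes the argument. In truth the statement is a corollary of the Proposition, so I expect no serious obstacle; the only genuine content is the interface computation. The point I would check most carefully is the leg count $\leg(b)=l-1$, because it is precisely this that forces the interface contribution to $\maj$ to be a multiple of $l$ and hence invisible modulo $l$. The vanishing of the interface inversions is then immediate, the single box having empty arm and no attacking partners.
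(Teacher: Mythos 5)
Your argument is correct and is essentially the paper's own proof: the paper likewise reduces everything to the interface row $k+1$ via the preceding Proposition, observes that $\Att_{k+1,k}=\emptyset$ and hence $\Inv_{k+1,k}(T)=\emptyset$, that the unique candidate descent box $b=(k+1,1)$ has $\arm(b)=0$, and that $\maj_{k+1,k}(T)=l\cdot\numof{\Des_{k+1,k}(T)}\equiv 0\pmod{l}$. Your explicit computation $\leg(b)=l-1$ just makes precise a step the paper leaves implicit.
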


\begin{example}
Let us consider the case $l=3$ and $\mu=(2,2,1,1,1)$.
In this case, we have
\begin{align*}
\maj\left (\ \young(2,1,3,23,12)\ \right) &=1+3+4+1=9 ,&\maj\left( \
 \young(23,12)\ \right ) + \maj\ \left(\ \young(2,1,3)\ \right)&=(1+1)+1=3,\\ \\
\inv \left (\ \young(2,1,3,23,12)\ \right ) &=1-1 =0,
&\inv\left ( \ \young(23,12)\ \right) + \inv\left (\ \young(2,1,3)\ \right )&=(1-1)+0=0.
\end{align*}
\end{example}
\subsection{The case $n=2$}
First we determine two conditions in order to define the appropriate
$\tau$. 
We first define some technical conditions on fillings which we
will permit us to define the elementary steps of Algorithm \ref{def:tau}.  

\begin{definition}[Condition $xAx$]
A filling $\ \young(ab,A\ )\ $ satisfies
the condition $xAx$ if 
one of the following conditions holds
\begin{equation*}
a\leq A < b \quad \text{or}\quad b\leq A < a\ .
\end{equation*}
\end{definition}
\begin{definition}[Condition $xXxX$]
A filling $\ \young(ab,AB)\ $ satisfies
the condition $xXxX$ if 
one of the following conditions holds
\begin{equation*}
\begin{array}{ccccc}
a\leq A < b \leq B\ ,&\quad& A < b \leq B <a\ ,\\
b\leq A < a \leq B\ ,&\quad& A < a \leq B <b\ ,\\
a\leq B < b \leq A\ ,&\quad& B < b \leq A <a\ ,\\
b\leq B < a \leq A\ ,&\quad& B < a \leq A <b\ . 
\end{array}
\end{equation*}
\end{definition}
\begin{proposition}\label{remark:symmeticityII} 
We have the following property on the conditions $xAx$ and $xXxX$
\begin{enumerate}
\item If a filling $\ \young(ab,A\ )\ $ satisfies the condition $xAx$, 
then $\ \young(ba,A\ )\ $ also satisfies the condition $xAx$,
\item If a filling $\ \young(ab,AB)\ $ satisfies the condition $xXxX$, 
then $\ \young(ba,BA)\ $ also satisfies the condition $xXxX$.
\end{enumerate}
\end{proposition}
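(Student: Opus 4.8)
The plan is to prove both parts by directly inspecting the defining lists of inequalities and showing that the relevant swap of entries permutes each list onto itself; invariance of the list as a set then yields invariance of the condition.

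For part (1), I would note that $\young(ab,A\ )$ satisfies $xAx$ precisely when $a\leq A<b$ or $b\leq A<a$, and that the swap $a\leftrightarrow b$ producing $\young(ba,A\ )$ simply interchanges these two alternatives. The disjunction is therefore unchanged, so $\young(ba,A\ )$ satisfies $xAx$ exactly when $\young(ab,A\ )$ does, and the claim is immediate.

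For part (2), the operation carrying $\young(ab,AB)$ to $\young(ba,BA)$ is the simultaneous substitution $(a,b,A,B)\mapsto(b,a,B,A)$, and the key step is to verify that this substitution permutes the eight inequalities defining $xXxX$ among themselves. I would apply it to each condition in turn and match the result against the list; the outcome is that it pairs the first row of the displayed array with the fourth and the second row with the third, column by column. For instance $a\leq A<b\leq B$ is sent to $b\leq B<a\leq A$, and $A<b\leq B<a$ is sent to $B<a\leq A<b$, both of which already occur in the list. Since the substitution is an involution sending the list to a permutation of itself, the disjunction defining $xXxX$ is invariant, which proves part (2).

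I expect no conceptual difficulty: the statement is exactly the symmetry built into the way the two conditions are written, and it is this stability that will later justify applying the swaps in the definition of $\tau$. The only care needed is the bookkeeping in part (2), where the eight inequalities come in near-identical pairs differing only in the placement of a strict inequality, so each of the eight substitutions must be matched to the correct target to confirm that none falls outside the list.
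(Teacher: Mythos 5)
Your proof is correct: the paper states this proposition without proof, treating it as immediate from the way the conditions are written, and your direct verification that the substitution $(a,b,A,B)\mapsto(b,a,B,A)$ swaps the two alternatives of $xAx$ and permutes the eight inequalities of $xXxX$ (first row with fourth, second with third) is exactly the intended argument. The specific pairings you check are accurate, so nothing is missing.
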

We give an algorithm which permits to determine $\tau$ for any filling
of shape $\mu=(\mu'_1,\ldots, \mu'_k,2^l)$ with $\mu'_k \geq 2$.
\begin{algo}[Definition of $\tau$]\label{def:tau} 
\mbox{}
 \begin{itemize}
  \item {\bf Input:} A filling $T$ and $k$.
  \item {\bf Procedure} 
	\begin{enumerate}
	 \item[$\rhd$] Initialization of variables
		       \begin{enumerate}
			\item  $i\longleftarrow k\ $,
			\item  $\ T' \longleftarrow T$.
		       \end{enumerate}
	 \item[$\rhd$] If the $i$-th row and the $(i+1)$-th row of $T'$ satisfy
		       the condition $xAx$  do 
		       \begin{enumerate}
			\item  swap the two values in the $(i+1)$-th row of
			       $T'$,
			\item  $i \longleftarrow i+1$.
		       \end{enumerate}
		       else return $T'$.
	 \item[$\rhd$] While the $i$-th row and the $(i+1)$-th row of $T'$
		       satisfy the condition $xXxX$ do 
		       \begin{enumerate}
			\item  swap the two values in $(i+1)$-th row of
			       $T'$,
			\item  $i \longleftarrow i+1$.
		       \end{enumerate}
	\end{enumerate}
  \item {\bf Output:} The filling $T^\prime$.
 \end{itemize}
\end{algo}
\begin{example}\label{example:tau}
For $l=5$ and the following filling $T$, the steps of the algorithm are  
\begin{equation*}
T\ =\ \tableau[sbY]{1&4\\3&5\\2&6\\1&3\\\boxed{\mathbf{2}}&\boxed{\mathbf{4}}\\\boxed{\mathbf{3}}&3&3\\4&4&4}
\quad \longrightarrow\quad \tableau[sbY]{1 &4\\3&5\\2&6\\\boxed{\mathbf{1}} & \boxed{\mathbf{3}}\\\boxed{\mathbf{4}}&\boxed{\mathbf{2}}\\3&3&3\\4&4&4} 
\quad \longrightarrow\quad \tableau[sbY]{1&4\\3&5\\\boxed{\mathbf{2}}&\boxed{\mathbf{6}}\\\boxed{\mathbf{3}}&\boxed{\mathbf{1}}\\4&2\\3&3&3\\4&4&4} 
\quad \longrightarrow\quad \tableau[sbY]{1&4\\3&5\\6&2\\3&1\\4&2\\3&3&3\\4&4&4}\ =\ \tau(T)\ .
\end{equation*}
We have put in bold the cells which occur at each step of the
 algorithm. 
The first step corresponds to the condition $xAx$ and 
the others to the condition $xXxX$. 
\end{example}
\begin{proposition}
The application $\tau$ determined by the Algorithm \ref{def:tau} is an
 involution and a bijection. 
\end{proposition}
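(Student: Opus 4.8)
The plan is to show that applying Algorithm~\ref{def:tau} twice returns the original filling, which simultaneously proves $\tau$ is an involution and hence a bijection. The key structural observation is that the algorithm rewrites $T$ row by row, starting from the $k$-th and $(k+1)$-th rows and propagating upward (increasing $i$), and at each step it merely swaps the two entries in the current row. So $\tau(T)$ differs from $T$ only in a contiguous block of rows; outside that block the two fillings agree. I would first carry out a careful reading of the loop to pin down the stopping behavior: the first step is governed by condition $xAx$ (the $(k+1)$-th row of $\mu=(\mu'_1,\dots,\mu'_k,2^l)$ has length $3$, while the $k$-th has length $2$, so the relevant comparison is the $xAx$ shape), and every subsequent step is governed by $xXxX$ (both rows have length $2$). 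The algorithm terminates as soon as the current pair of rows fails the appropriate condition.

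The crux of the argument is Proposition~\ref{remark:symmeticityII}, the symmetry property of the two conditions. I would argue as follows. Suppose $\tau(T)$ is obtained by swapping rows $k+1,k+2,\dots,k+m$ (so the process stopped after $m$ swaps). I claim that running the algorithm on $\tau(T)$ swaps exactly the same rows and undoes each swap. The first point is that swapping the entries in a row that satisfies $xAx$ (resp.\ $xXxX$) produces a configuration that again satisfies $xAx$ (resp.\ $xXxX$): this is precisely Proposition~\ref{remark:symmeticityII}. Therefore when $\tau$ starts on $\tau(T)$ at $i=k$, the $k$-th and $(k+1)$-th rows still satisfy $xAx$ (since $T$ had them satisfying $xAx$, and only the $(k+1)$-th row was swapped, which preserves the condition by part~(1) applied with the top row unchanged), so the algorithm swaps the $(k+1)$-th row back, restoring the original. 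Proceeding inductively, each subsequent pair of rows in $\tau(T)$ still satisfies $xXxX$ (by part~(2), using that both rows of the pair were swapped, which is exactly the hypothesis of part~(2)), so the algorithm swaps them back in turn.

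The delicate step, which I expect to be the main obstacle, is handling the termination condition consistently in both directions. I must verify that the algorithm run on $\tau(T)$ stops after exactly $m$ swaps and does not continue past row $k+m$. This requires showing that the pair of rows where $T$'s run stopped — i.e., the rows that \emph{failed} the condition in the original run — still fails the condition after the swaps have been applied to $\tau(T)$. Here one uses the contrapositive of Proposition~\ref{remark:symmeticityII}: if the $(k+m)$-th and $(k+m+1)$-th rows of $T$ (in their post-swap state, as they appear in $\tau(T)$) do not satisfy $xXxX$, then neither does the configuration obtained by swapping them back. Concretely, the state of these rows in $\tau(T)$ is the state reached by the original run at the point of termination, and by definition of termination it fails $xXxX$; applying the swap that the reverse run would perform and invoking the symmetry property shows the reverse run likewise halts there. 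One must also check the boundary case where the very first $xAx$ test fails, so that $\tau(T)=T$ and the involution property holds trivially, and the case where the block extends all the way to row $k+l$, so that the $xXxX$ while-loop exhausts all available rows; in the latter case there is no terminating pair and the run stops because $i$ reaches its maximum, which is symmetric under the reverse run by the same row-count bookkeeping.

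Assembling these pieces, I conclude $\tau(\tau(T)) = T$ for every filling $T$, so $\tau$ is an involution; an involution on a finite set is automatically a bijection, which gives the proposition. The only real care needed is the bookkeeping of exactly which rows are swapped and the clean separation between the single $xAx$ step at the bottom and the iterated $xXxX$ steps above it.
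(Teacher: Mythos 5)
Your overall strategy coincides with the paper's: the paper proves this proposition in two lines, asserting that the involution property follows directly from Proposition \ref{remark:symmeticityII} and that bijectivity follows since each step of Algorithm \ref{def:tau} is invertible. Your write-up is a faithful, much more detailed expansion of that argument, and you correctly isolate the one genuinely delicate point, namely that the run of the algorithm on $\tau(T)$ must terminate at the same row as the run on $T$.

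However, the tool you invoke at that point does not literally cover it. Proposition \ref{remark:symmeticityII}(2) (and its contrapositive) concerns swapping \emph{both} rows of the $2\times 2$ configuration, i.e.\ it relates $\young(ab,AB)$ to $\young(ba,BA)$. At the termination boundary the two runs test configurations that differ by swapping only the \emph{lower} row: the forward run fails on $\young(ab,AB)$ with $(A,B)$ the already-swapped row $k+m$ and $(a,b)$ the never-swapped row $k+m+1$, while the reverse run tests $\young(ab,BA)$, since by then it has restored row $k+m$ but row $k+m+1$ was never touched. To conclude that the reverse run also halts there you need the invariance of the condition $xXxX$ under exchanging $A$ and $B$ alone (for the $m=1$ boundary against the $xAx$ test the analogous single-row statement is exactly Proposition \ref{remark:symmeticityII}(1), so that case is fine). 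This single-row invariance is true --- the eight inequalities defining $xXxX$ are visibly permuted by $A\leftrightarrow B$ --- but it must be checked separately; it is not a formal consequence of the two-row symmetry you cite. With that one supplementary observation your argument is complete, and it is in fact more careful than the paper's own proof, which does not address termination at all. (A side remark: your parenthetical about row lengths is off --- in $\mu=(\mu'_1,\dots,\mu'_k,2^l)$ the $(k+1)$-th row has length $2$ and the $k$-th row has length $\mu'_k\geq 2$ --- but nothing in your argument depends on it.)
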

\begin{proof}
The fact that $\tau$ is an involution follows directly from Proposition
 \ref{remark:symmeticityII}. 
Moreover, as each step of the Algorithm \ref{def:tau} is invertible, 
the map $\tau$ is a bijection on $\mathcal{F}_{\mu,\nu}$. 
\end{proof}
\begin{theorem}\label{thm:n=2}
For a partition $\mu=(\mu'_1,\ldots,\mu'_k, 2^l)$ such that 
$\mu'_k\geq 2$, let 
 $\pi^{\ast}$ be the natural bijection defined in (\ref{def:pistar}),
 and $\tau$ be the involution determined by Algorithm \ref{def:tau}.
Then $\pi^{\ast}$ and $\tau$ satisfy
\begin{align*}
\left \lbrace
\begin{array}{ccl}
\maj(\tau(T)) & \equiv & \maj(\pi^{\ast}(T)) \pmod{l}\ ,\\
\inv(\tau(T)) &     =  & \inv(\pi^{\ast}(T)) \ .
\end{array}\right .
\end{align*}
\end{theorem}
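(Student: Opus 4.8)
The plan is to prove the two congruences by comparing the statistics of $\tau(T)$ and $\pi^{\ast}(T)=(T',T'')$ row by row, using the decompositions $\maj(T)=\sum_i \maj_{i,i-1}(T)$ and $\inv(T)=|\Inv_1(T)|+\sum_i \inv_{i,i-1}(T)$ established in Section \ref{sec:def}. By the Proposition on $\pi^{\ast}$, all the row contributions above the junction row $l(\mu')+1=k+1$ and all those strictly below it already match, up to the controlled shift $\maj_{i,i-1}(T)=\maj_{i,i-1}(T')+l\cdot|\Des_{i,i-1}(T')|$, which vanishes modulo $l$. Thus the entire content of the theorem reduces to analyzing what happens at the single interface between the tail $\mu'$ and the rectangle $(2^l)$, together with the effect of the swaps performed by Algorithm \ref{def:tau}. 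First I would isolate the quantities $\maj_{k+1,k}(T)$, $\inv_{k+1,k}(T)$ on the one hand and the corresponding contributions of $\tau(T)$ on the other, and show that every other row contributes identically to $\maj(\tau(T))$ and $\maj(\pi^{\ast}(T))$ modulo $l$, and identically to $\inv(\tau(T))$ and $\inv(\pi^{\ast}(T))$ on the nose.

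The core of the argument is a local invariance claim about the two elementary moves. For each swap the algorithm performs on the $(i+1)$-th row, I would verify that swapping the two entries in a $2\times 2$ (or $2\times 1$ in the $xAx$ case) window satisfying condition $xAx$ or $xXxX$ leaves $\inv(T)$ exactly invariant and changes $\maj(T)$ by a multiple of $l$. Concretely, for a single swap in row $i+1$, the affected inversions and descents are those recorded in $\Inv_{i+1}(T)$, $\Inv_{i+1,i}(T)$, $\Inv_{i+2,i+1}(T)$ and the descents $\Des_{i+1,i}(T)$, $\Des_{i+2,i+1}(T)$; everything else is untouched. The precise case enumeration in the definitions of $xAx$ and $xXxX$ is engineered so that the net change in the inversion count cancels, while the change in $\maj$ comes only through $\Des_{i+1,i}$ and $\Des_{i+2,i+1}$, whose legs all lie in the rectangular part and hence contribute in multiples of $l$ after summing over the $l$ identical rows. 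The symmetry recorded in Proposition \ref{remark:symmeticityII} guarantees that each move is reversible and that the bookkeeping is consistent in both directions.

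The main obstacle I anticipate is the delicate inversion bookkeeping at the moment the algorithm crosses the junction row, i.e.\ the very first $xAx$ step at $i=k$. There the lower row belongs to $\mu'$ and may have length strictly greater than $2$ (since $\mu'_k\geq 2$), so the window is genuinely asymmetric: the upper cells attack only the first two cells of the lower row, and the arm lengths differ between the tail and the rectangle. I would handle this by checking directly that condition $xAx$ forces the swap to preserve the total of $|\Inv_{k+1}(T)|+|\Inv_{k+1,k}(T)|-\arm_{k+1,k}(T)$, treating separately the interaction of the two swapped entries with the extra cells $(k,j)$ for $j>2$ of the tail, which are not swapped and whose attacking relationships with the window must be shown to be order-insensitive under the hypothesis $a\le A<b$ or $b\le A<a$. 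Once this boundary case is settled, the interior $xXxX$ steps are uniform and the induction along the rectangle closes.

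Finally I would assemble the pieces: the product of all the local invariances shows $\inv(\tau(T))=\inv(T)$ with the single boundary contribution rewritten exactly as $\inv(\pi^{\ast}(T))$ via the Proposition, and likewise $\maj(\tau(T))\equiv\maj(T)\equiv\maj(\pi^{\ast}(T))\pmod{l}$, where the $l$-periodicity of the rectangle $(2^l)$ absorbs all the descent-shift terms. Combining these with the already-matched rows above and below the junction yields exactly the two required relations, completing the proof.
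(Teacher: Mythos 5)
Your overall architecture is the same as the paper's: use $\pi^{\ast}$ to reduce everything to the junction row $k+1$ and to the rows touched by Algorithm \ref{def:tau}, then prove a local invariance statement for the elementary moves. However, the local claim you propose to verify --- that each \emph{individual} swap of row $i+1$ leaves $\inv$ exactly invariant and changes $\maj$ by a multiple of $l$ --- is false, and this is a genuine gap. A single swap of row $i+1$ affects not only $\inv_{i+1,i}$ but also $\inv_{i+2,i+1}$, and when the pair of rows $(i+1,i+2)$ itself satisfies condition $xXxX$ the latter genuinely changes. For instance, take row $i+2=(3,1)$ above row $i+1=(2,4)$ (this pair satisfies $xXxX$ via $b\leq A<a\leq B$): before the swap one computes $\inv_{i+2,i+1}=1+0-1=0$, and after swapping row $i+1$ alone one gets $1+1-0=2$; moreover $\numof{\Des_{i+2,i+1}}$ drops from $1$ to $0$, and the lost descent sits in row $i+2$ of the rectangle, where $\leg+1=k+l-i-1$ lies strictly between $0$ and $l$, so the change in $\maj$ is not a multiple of $l$ either. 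This failure is exactly \emph{why} the algorithm propagates the swap upward instead of stopping after one step, so the lemma you plan to prove cannot be proved.

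The correct bookkeeping, which is what the paper does, is per pair of adjacent rows of the final outcome $\tau(T)$ rather than per swap: pairs in which both rows end up swapped are handled by the ``swap both rows'' invariance (Lemma \ref{lemma:stabxXxX}); the junction pair is handled by Lemmas \ref{lemma:casexAx} and \ref{lemma:casenotxAx}; and, crucially, the single pair at the top of the chain --- where the lower row has been swapped but the upper row has not --- requires the classification Lemma \ref{lemma:AdB}: a pair with distinct lower entries that fails $xXxX$ must be in one of the ``stable'' configurations of Lemmas \ref{lemma:stabI} and \ref{lemma:stabII}, which are insensitive to swapping the lower row alone. Your proposal never addresses this termination step; you single out the junction window with $\mu'_k>2$ as the main difficulty, but that part is comparatively routine, whereas the analysis of where and why the propagation stops is where the real content lies. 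Your explanation of the $\maj$ congruence (legs in the rectangle ``contribute in multiples of $l$ after summing over the $l$ identical rows'') is also not the right mechanism: the point is that $\numof{\Des_{i+1,i}}$ is preserved exactly at every level $i\geq k+1$, so the only discrepancy between $\maj(\tau(T))$ and $\maj(\pi^{\ast}(T))$ comes from the junction descents, each of leg $l-1$, contributing the multiple $l\cdot\numof{\Des_{k+1,k}(\tau(T))}$ that is discarded when passing to $\pi^{\ast}(T)$.
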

\begin{example}
Let $l=5$ and $T$ be the the filling of Example \ref{example:tau}.
For the statistic $\maj$, we have
\begin{align*}
\left \{
\begin{array}{ccl}
\maj(\tau(T))= &\maj\left(\ \young(14,35,62,31,42,333,444)\ \right)
& =13\ , \\
\text{and}\\
\maj(\pi^{\ast}(T))=&\maj\left (\ \young(333,444)\ \right)+\maj\left(\ \young(14,35,62,31,42)\ \right)
&=0+8=8\equiv 13 \mod 5\ .
\end{array}\right .
\end{align*}
And for the statistic $\inv$, we have
\begin{align*}
\left \{
\begin{array}{ccl}
\maj(\tau(T))= & \inv\left(\ \young(14,35,62,31,42,333,444)\ \right) 
& =2\ , \\
\maj(\pi^{\ast}(T))= & \inv\left(\ \young(333,444)\ \right)+\inv\left(\ \young(14,35,62,31,42)\ \right)
&=0+2=2 \ .
\end{array} \right .
\end{align*}
\end{example}
\section{Proof of the main Theorem}\label{sec:proofn=2}
In order to prove Theorem \ref{thm:n=2}, i.e 
\begin{align*}
\maj(\pi^{\ast}(T))\equiv \maj(\tau(T)) \pmod{l} \quad \text{and} \quad 
\inv(\pi^{\ast}(T))=\inv(\tau(T))\ ,
\end{align*}
we present the following five technical lemmas
which follow from direct computations.

\begin{lemma}\label{lemma:casexAx}
Let $T=\young(ab,A\ )$ and $T'=\young(ba,A\ )\ .$
If  $T$  satisfies the condition $xAx$,
then 
\begin{equation*}
\numof{\Inv_{2}(T)}=\inv_{2,1}(T') \ .
\end{equation*}
\end{lemma}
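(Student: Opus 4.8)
The plan is to reduce both sides of the claimed identity to indicator functions of the relative order of $a$, $b$ and $A$, and then to check equality in the two cases permitted by the condition $xAx$. Throughout I work with the two-row shape of the lemma, whose top row (row $2$) carries $a,b$ and whose bottom row (row $1$) carries the single entry $A$ beneath the cell of $a$.

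First I would compute the left-hand side. Because the top row of $T$ consists of exactly the two cells carrying $a$ and $b$, the set $\Att_2(T)$ contains only the single same-row pair, so $\numof{\Inv_2(T)}$ equals $1$ when $a>b$ and $0$ when $a<b$. Next I would expand the right-hand side through $\inv_{2,1}(T')=\numof{\Inv_2(T')}+\numof{\Inv_{2,1}(T')}-\arm_{2,1}(T')$ and identify each term for $T'=\young(ba,A\ )$. The top-row count $\numof{\Inv_2(T')}$ is $1$ precisely when $b>a$. The only cross-row attacking pair joins the cell of $a$, now in column $2$, to the cell of $A$ beneath column $1$, so $\numof{\Inv_{2,1}(T')}$ is $1$ precisely when $a>A$. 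Finally $\Des_{2,1}(T')$ is non-empty exactly when $b>A$, in which case its unique box has arm $1$; hence $\arm_{2,1}(T')$ equals $1$ when $b>A$ and $0$ otherwise.

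It then remains to verify the identity in the two alternatives of the condition $xAx$. When $a\leq A<b$ the left-hand side is $0$, while on the right one has $b>a$, $a\not>A$ and $b>A$, giving $1+0-1=0$. When $b\leq A<a$ the left-hand side is $1$, while on the right $b\not>a$, $a>A$ and $b\not>A$, giving $0+1-0=1$. In both cases the two sides agree, which is the assertion.

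The computation is elementary, so the only point that really demands care is the bookkeeping of the attacking pairs and of the unique descent and its arm in this small shape, precisely because the lower row contains a single cell and the configuration is not itself a partition. I expect that careful accounting of these contributions, rather than any conceptual difficulty, to be the main (and modest) obstacle.
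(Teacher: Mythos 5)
Your computation is correct and is precisely the ``direct computation'' the paper intends: the paper gives no written proof of this lemma, and your reduction of both sides to the indicators $[a>b]$, $[b>a]$, $[a>A]$, $[b>A]$ followed by the two-case check under $xAx$ is the argument being alluded to. One caveat worth recording: you have taken the unique cross-row attacking pair of $T'$ to be (upper cell in column $2$, lower cell in column $1$), which agrees with the paper's prose definition of attacking cells and with \cite{HHL}, but not with the displayed formula for $\Att_{2,1}(\mu)$ (which pairs $(2,1)$ with $(1,2)$ and appears to be a typo) --- under that literal formula the identity would involve the unspecified entry of the bottom row and would fail, so your reading is the only one under which the lemma, and indeed several of the paper's own worked examples, is true.
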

\begin{lemma}\label{lemma:casenotxAx}
If a filling $T=\young(ab,A\ )$ does not satisfy the condition $xAx$,
then
\begin{equation*} 
\numof{\Inv_{2}(T)}=\inv_{2,1}(T)\ .
\end{equation*}
\end{lemma}
\begin{lemma}\label{lemma:stabI}
Let $T=\young(ab,AB)$ and $T'=\young(ab,BA)$ be two fillings such that $T$ satisfies one of the following conditions
\begin{equation*}
\begin{array}{ccc}
a,b \leq A,B\ , & \quad & a \leq A,B <b \ , \\
A,B < a,b \ ,   & \quad & b \leq A,B <a\ .
\end{array}
\end{equation*}
Hence, we have the following relations
\begin{align*}
\Des_{2,1}(T) & =  \Des_{2,1}(T')\ , \\
\Inv_{2,1}(T) & =  \Inv_{2,1}(T')\ , \\
\Inv_{2}(T)   & =  \Inv_{2}(T')\ .
\end{align*}
\end{lemma}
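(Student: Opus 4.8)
The plan is to prove Lemma~\ref{lemma:stabI} by a direct case-by-case verification, exploiting the very restrictive nature of the four hypothesized conditions. The common feature of all four conditions is that both bottom-row entries $A$ and $B$ occupy the \emph{same position} relative to the top-row entries: either both are simultaneously weakly above $a$ and $b$, or both are strictly below, or both sit in one of the two intermediate bands. The crucial consequence is that swapping $A$ and $B$ (passing from $T$ to $T'$) cannot change whether a given bottom cell forms a descent with the cell directly above it, nor whether a bottom cell forms an inversion with a top cell, because these comparisons depend only on the \emph{band} into which $A$ and $B$ fall, and the two entries lie in the same band.

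Concretely, I would fix the $2\times 2$ shape so that $\Des_{2,1}$ consists of the bottom cells $(2,1),(2,2)$ with $T_{2,j}>T_{1,j}$, $\Inv_{2,1}$ records inversions between a bottom cell and a top cell strictly to its right (an $\Att_{2,1}$ pair), and $\Inv_{2}$ records inversions within the bottom row. First I would treat $\Des_{2,1}$: the descent status of cell $(2,1)$ compares the bottom-left entry against $a$, and the status of $(2,2)$ compares the bottom-right entry against $b$. In each of the four conditions the pair $(A,B)$ straddles $a$ and $b$ identically from both sides, so whichever of $A,B$ is placed left versus right, the two descent comparisons against $a$ and $b$ yield the same total descent set; hence $\Des_{2,1}(T)=\Des_{2,1}(T')$. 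Next I would check $\Inv_{2,1}$, the single attacking pair $((2,1),(1,2))$ comparing the bottom-left entry with $b$; again, because $A$ and $B$ lie on the same side of $b$, this comparison is unaffected by the swap. Finally $\Inv_{2}$ compares the two bottom entries with each other; swapping them reverses the pair but the reading-order/attacking conditions defining membership in $\Inv_2$ are symmetric in the two bottom cells of a single row, so the set is preserved as well.

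The main obstacle, such as it is, will be bookkeeping rather than mathematics: I must be careful that each of the four displayed conditions really forces $A$ and $B$ into a \emph{common} comparison band against \emph{each} of the four relevant thresholds ($a$, $b$, and the two entries among themselves), including the boundary behaviour of the weak versus strict inequalities ($\leq$ versus $<$) that distinguish the conditions. The safest route is to tabulate, for each of the four conditions, the sign of $A-a$, $B-a$, $A-b$, $B-b$, and observe that within a fixed condition these signs depend only on the band and not on the label, so the swap leaves every relevant comparison intact. Since Proposition~\ref{remark:symmeticityII} already guarantees that the relevant symmetry of the two rows is respected, I would invoke it to streamline the $\Inv_2$ argument and then record the three equalities $\Des_{2,1}(T)=\Des_{2,1}(T')$, $\Inv_{2,1}(T)=\Inv_{2,1}(T')$, and $\Inv_{2}(T)=\Inv_{2}(T')$ as the conclusion.
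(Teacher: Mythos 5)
Your overall strategy --- a direct case-by-case check exploiting the fact that each of the four hypotheses places $A$ and $B$ in the same comparison band relative to $a$ and $b$ --- is exactly the ``direct computation'' the paper intends, and your treatment of $\Des_{2,1}$ and $\Inv_{2,1}$ goes through once the conventions are straightened out. But note that you have the row indexing upside down: the paper uses the French convention, so in $\young(ab,AB)$ the row containing $A,B$ is row $1$ (the bottom row) and the row containing $a,b$ is row $2$; a cell $(2,j)$ lies in $\Des_{2,1}$ iff the \emph{top} entry exceeds the one below it ($a>A$, resp.\ $b>B$). Under your stated reading (row $2$ at the bottom, so that the descent comparison becomes ``bottom $>$ top'') the lemma fails at exactly the boundary cases you defer: take $a=b=A=3$, $B=5$, which satisfies $a,b\le A,B$; then $B>b$ holds while $A>b$ does not, so the descent would migrate from one cell to the other under the swap. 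The weak/strict inequalities in the four hypotheses are calibrated precisely for the paper's orientation, where each comparison $a>\cdot$ and $b>\cdot$ is constant on each band.

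The more serious problem is your argument for the third equality. You treat $\Inv_{2}$ as the same-row inversions of the row being swapped and assert that membership is ``symmetric in the two cells''; it is not. A same-row pair is an inversion precisely when the left entry exceeds the right entry (the left cell precedes in reading order), so swapping $(A,B)\mapsto(B,A)$ with $A\neq B$ \emph{always} changes the inversion status of that row: exactly one of $(A,B)$ and $(B,A)$ contributes an inversion. If $\Inv_{2}$ referred to the $A,B$ row, the claimed equality would be false, e.g.\ with $A=5$, $B=3$ under the condition $A,B<a,b$. The equality $\Inv_{2}(T)=\Inv_{2}(T')$ holds for the opposite, trivial reason: $\Inv_{2}$ is the set of same-row inversion pairs of row $2$, which is the \emph{unchanged} top row $(a,b)$, identical in $T$ and $T'$. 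Replace your symmetry argument by this one-line observation, correct the orientation in the descent tabulation, and the rest of your plan does verify the lemma.
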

\begin{lemma}\label{lemma:stabII}
Let $T=\young(ab,AB)$ and $T'=\young(ab,BA)$ be two fillings such that $T$ satisfies one of the following conditions
\begin{gather*}
A < a,b \leq B,\\
B < a,b \leq A.
\end{gather*}
Hence, we have
\begin{equation*}
\numof{\Des_{2,1}(T)}= \numof{\Des_{2,1}(T')} 
\quad \text{and} \quad 
\inv_{2,1}(T)= \inv_{2,1}(T')\ .
\end{equation*}
\end{lemma}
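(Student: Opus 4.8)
The plan is to treat the statement as a finite case check, since every term entering $\inv_{2,1}$ and $\numof{\Des_{2,1}}$ is determined by the order relations among the four entries $a,b,A,B$, and passing from $T$ to $T'$ only permutes the lower row. Because the upper row $(a,b)$ is common to $T$ and $T'$, the intra-row contribution $\numof{\Inv_{2}(T)}=\numof{\Inv_{2}(T')}$ (the single pair $((2,1),(2,2))$, an inversion exactly when $a>b$) cancels from the definition $\inv_{2,1}=\numof{\Inv_{2}}+\numof{\Inv_{2,1}}-\arm_{2,1}$. It therefore suffices to establish
\begin{equation*}
\numof{\Des_{2,1}(T)}=\numof{\Des_{2,1}(T')}
\quad\text{and}\quad
\numof{\Inv_{2,1}(T)}-\arm_{2,1}(T)=\numof{\Inv_{2,1}(T')}-\arm_{2,1}(T')\ .
\end{equation*}

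First I would fix the local geometry. In this two-row shape the descent candidates are the cells $(2,1)$ and $(2,2)$, of arms $1$ and $0$ respectively, while the only attacking pair straddling the two rows is the upper-right cell $(2,2)$ together with the lower-left cell $(1,1)$; hence $\numof{\Inv_{2,1}}$ equals $1$ precisely when $b$ exceeds the entry occupying $(1,1)$. Swapping the lower row thus does two things at once: it exchanges $A$ and $B$ in the cell $(1,1)$, toggling this straddling inversion, and it moves the descent from one column to the other. The whole content of the lemma is that these two effects cancel.

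Next I would run the two hypotheses. Under $A<a,b\leq B$ the unique descent of $T$ sits in $(2,1)$ (arm $1$) while that of $T'$ sits in $(2,2)$ (arm $0$), and $\numof{\Inv_{2,1}(T)}=1$ because $b>A$ whereas $\numof{\Inv_{2,1}(T')}=0$ because $b\leq B$; consequently $\numof{\Inv_{2,1}(T)}-\arm_{2,1}(T)=1-1=0=\numof{\Inv_{2,1}(T')}-\arm_{2,1}(T')$ and both descent sets are singletons. The case $B<a,b\leq A$ is obtained by interchanging $A$ and $B$: the descent of $T$ now lies in $(2,2)$ and that of $T'$ in $(2,1)$, the straddling inversion appears for $T'$ rather than for $T$, and again the common value of $\numof{\Inv_{2,1}}-\arm_{2,1}$ is $0$ with equal descent counts.

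I do not expect a genuine obstacle here: the argument is a finite verification, and the only care required is in the conventions, namely identifying the correct straddling attacking pair and assigning each descent cell its arm. The one feature worth flagging, and the reason the present lemma differs from Lemma \ref{lemma:stabI}, is that the descent set is not preserved but merely shifted between the two columns; one must therefore compare the cardinalities $\numof{\Des_{2,1}}$ rather than the sets, and track the accompanying change of arm so that it is seen to annihilate the change in the straddling inversion. Since the two hypotheses are exchanged by $A\leftrightarrow B$, verifying one of them in full suffices.
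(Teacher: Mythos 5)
Your proof is correct and takes the same route as the paper, which offers no argument beyond the assertion that this lemma ``follows from direct computations''; your enumeration of the descent cells, their arms, and the single cross-row attacking pair is precisely that computation written out, and both cases check. One remark: you have (rightly) taken the prose definition of attacking cells, so that the cross-row attacking pair is the upper-right cell $(2,2)$ with the lower-left cell $(1,1)$ and $\numof{\Inv_{2,1}}$ tests $b$ against the entry in $(1,1)$; the displayed definition of $\Att_{2,1}$ in the paper has the column inequality the other way around, and under that literal reading the two sides of the lemma would differ by $2$, so your choice of convention is the one that makes the statement (and the rest of the paper) work.
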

\begin{lemma}\label{lemma:stabxXxX}
Let $T=\young(ab,AB)$ and $T'=\young(ba,BA)$ be two fillings such that 
$T$ satisfies the condition $xXxX$.
Hence, we have
\begin{equation*}
\numof{\Des_{2,1}(T)}= \numof{\Des_{2,1}(T')} 
\quad \text{and} \quad
\inv_{2,1}(T)= \inv_{2,1}(T')\ .
\end{equation*}
\end{lemma}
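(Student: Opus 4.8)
The plan is to reduce both assertions to a finite computation in the four entries of the $2\times2$ fillings. Throughout, write $[P]$ for $1$ if the statement $P$ holds and $0$ otherwise. First I would read off the three ingredients of $\inv_{2,1}$ directly from the definitions applied to $T=\young(ab,AB)$: the unique horizontal attacking pair gives $\numof{\Inv_{2}(T)}=[a>b]$; the unique cross-row attacking pair is $((2,2),(1,1))$, since the cell of the upper row must lie strictly to the right of the cell of the lower row, whence $\numof{\Inv_{2,1}(T)}=[b>A]$; and among the possible descents of the second row only the one in column~$1$ has a nonzero arm, so $\arm_{2,1}(T)=[a>A]$. This yields
\[
\inv_{2,1}(T)=[a>b]+[b>A]-[a>A],
\qquad
\numof{\Des_{2,1}(T)}=[a>A]+[b>B].
\]
Substituting the swapped entries of $T'=\young(ba,BA)$ then gives the companion formulas $\inv_{2,1}(T')=[b>a]+[a>B]-[b>B]$ and $\numof{\Des_{2,1}(T')}=[b>B]+[a>A]$.

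With these in hand the descent equality is immediate and uses no hypothesis: exchanging the two columns merely interchanges the summands $[a>A]$ and $[b>B]$, so $\numof{\Des_{2,1}(T')}=\numof{\Des_{2,1}(T)}$. The entire content of the lemma thus collapses to the single scalar identity
\[
[a>b]+[b>A]-[a>A]=[b>a]+[a>B]-[b>B],
\]
and this is exactly where the hypothesis $xXxX$ must be used; without it the identity can fail, for instance at $(a,b,A,B)=(3,1,2,2)$, which satisfies none of the eight conditions.

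To establish the identity I would argue that each of the eight chains of inequalities defining $xXxX$ fixes the complete weak order of $\{a,b,A,B\}$, so all six brackets above can simply be evaluated and compared. The labour can be halved by symmetry: the passage $T\mapsto T'$ sends $(a,b,A,B)$ to $(b,a,B,A)$ and, by Proposition~\ref{remark:symmeticityII}, permutes the eight $xXxX$-conditions among themselves in four swap-orbits (for example $a\le A<b\le B$ is paired with $b\le B<a\le A$); since the displayed identity is itself invariant under interchanging its two sides, which is precisely the effect of $T\leftrightarrow T'$, it suffices to check one representative of each orbit. A representative case such as $a\le A<b\le B$ gives $0+1-0$ on the left and $1+0-0$ on the right, illustrating the pattern.

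I expect the only real obstacle to be the combinatorial bookkeeping that precedes the computation rather than the computation itself: one must correctly pin down the unique cross-row attacking pair and its orientation (it is $((2,2),(1,1))$, not its mirror image, a choice forced by the attacking convention) together with the arm correction drawn from $\Des_{2,1}$. Once the six Iverson brackets are laid out as above, a short table of their values across the four representative $xXxX$-cases makes both equalities of the lemma transparent.
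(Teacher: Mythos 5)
Your proof is correct, and it is precisely the ``direct computation'' that the paper invokes for this lemma without writing it out: express $\numof{\Des_{2,1}}$ and the three constituents of $\inv_{2,1}$ as Iverson brackets in $a,b,A,B$, observe that the descent count is symmetric under the swap, and verify the resulting bracket identity over the eight $xXxX$ chains, halved to four orbits by the $T\leftrightarrow T'$ symmetry of Proposition~\ref{remark:symmeticityII}; I have checked all eight cases and your formulas, and your counterexample $(3,1,2,2)$ showing the hypothesis is needed is also valid.

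One point deserves explicit mention, since it is the only place a reader could think you erred. Your identification $\numof{\Inv_{2,1}(T)}=[b>A]$, via the cross-row attacking pair $((2,2),(1,1))$, follows the paper's prose definition of attacking cells (upper cell strictly to the right of the lower one), not its displayed formula $\Att_{i,i-1}(\mu)=\{((i,j),(i-1,k))\mid 1\le j<k\le\mu_i\}$, which literally places the upper cell to the \emph{left} and would give $[a>B]$ instead. These two conventions are genuinely inconsistent in the paper, and under the literal displayed formula the lemma is false: for $(a,b,A,B)=(1,3,2,4)$, which satisfies $a\le A<b\le B$, one gets $\inv_{2,1}(T)=0$ but $\inv_{2,1}(T')=2$. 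So the displayed $\Att_{i,i-1}$ must be read with the inequality reversed, and your choice is the one under which the lemma (and the main theorem) holds. With that convention fixed, your argument is complete.
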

\begin{lemma}\label{lemma:AdB}
Let $T=\young(ab,AB)$ be a filling which satisfies $A\neq B$.
Then, $T$ satisfies the condition $xXxX$ or the conditions used in
 Lemma \ref{lemma:stabI} and \ref{lemma:stabII}. 
\end{lemma}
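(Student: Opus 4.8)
The plan is to reduce to the case $A<B$ by a symmetry argument, then to classify the position of the top entries $a,b$ relative to the two bottom entries $A,B$, and finally to match each resulting case to one of the named conditions. First I would observe that the whole disjunction in the conclusion --- namely ``$xXxX$, or one of the conditions of Lemma \ref{lemma:stabI}, or one of the conditions of Lemma \ref{lemma:stabII}'' --- is invariant under interchanging $A$ and $B$. Indeed, the eight clauses defining $xXxX$ are permuted among themselves by the swap $A\leftrightarrow B$; each of the four conditions of Lemma \ref{lemma:stabI} is individually symmetric in $A$ and $B$, since it only constrains $\min(A,B)$ and $\max(A,B)$ against $a$ and $b$; and the two conditions of Lemma \ref{lemma:stabII} are interchanged with one another. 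Since $A\neq B$ by hypothesis, I may therefore assume without loss of generality that $A<B$.

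Assuming $A<B$, I would split the possible values into the three sets $L=\{x\colon x\leq A\}$, $M=\{x\colon A<x\leq B\}$ and $H=\{x\colon B<x\}$, which partition the integers. Each of $a$ and $b$ lies in exactly one of $L,M,H$, giving nine cases according to which pair of sets contains $(a,b)$. I would then verify that every case is covered. The three ``diagonal'' cases, where $a$ and $b$ lie in the same set, fall respectively under the first condition of Lemma \ref{lemma:stabI} (for $(L,L)$, where $a,b\leq A,B$), the first condition of Lemma \ref{lemma:stabII} (for $(M,M)$, where $A<a,b\leq B$), and the third condition of Lemma \ref{lemma:stabI} (for $(H,H)$, where $A,B<a,b$). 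The two cases where one entry is in $L$ and the other in $H$ fall under the remaining conditions of Lemma \ref{lemma:stabI}. Finally, the four cases where one entry lies in $M$ and the other in $L$ or $H$ are precisely the four clauses of $xXxX$ that survive the assumption $A<B$; the other four clauses are vacuous under $A<B$, since each of them forces some value to satisfy $x\leq A$ and $B<x$ simultaneously.

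Since this is a finite disjunction of inequality patterns, there is no real structural difficulty, and I expect no serious obstacle. The one point requiring care is the bookkeeping of strict versus non-strict inequalities at the two boundary values $x=A$ and $x=B$: I would define $L,M,H$ with exactly the conventions ($\leq$ at $A$ and $\leq$ at $B$) used in the definitions of $xXxX$, Lemma \ref{lemma:stabI} and Lemma \ref{lemma:stabII}, so that the nine cases are genuinely exhaustive and each falls under precisely the advertised condition, with no gap or overlap at the endpoints.
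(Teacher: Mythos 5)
Your proof is correct: the symmetry reduction to $A<B$ and the nine-way case split over $L=\{x\le A\}$, $M=\{A<x\le B\}$, $H=\{B<x\}$ exhaustively covers the conclusion, with the four surviving $xXxX$ clauses matching the mixed $(L,M)$, $(M,L)$, $(M,H)$, $(H,M)$ cases exactly. The paper gives no explicit argument (it lists this among lemmas that ``follow from direct computations''), and your case analysis is precisely that direct computation, carried out with the correct handling of the boundary inequalities.
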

Lemmas \ref{lemma:stabI}, \ref{lemma:stabII} and \ref{lemma:stabxXxX}
imply the following key lemma.

\begin{lemma}\label{lemma:casexXxX}
In Algorithm \ref{def:tau}, the swapping of the value in the $i+1$-th row
 when the $i$-th and the $i+1$-th rows are in the condition $xXxX$ does
 not change the statistic 
$\maj_{i+1,i}$ and $\inv_{i+1,i}$\ .
\end{lemma}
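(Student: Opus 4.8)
The plan is to prove Lemma \ref{lemma:casexXxX} by reducing the claim about the effect of the $xXxX$-swap on the restricted statistics $\maj_{i+1,i}$ and $\inv_{i+1,i}$ to the three already-stated structural Lemmas \ref{lemma:stabI}, \ref{lemma:stabII} and \ref{lemma:stabxXxX}. Since the restricted statistics $\maj_{i+1,i}(T')$ and $\inv_{i+1,i}(T')$ depend only on the entries of the $i$-th and $(i+1)$-th rows of the filling $T'$ at the moment the swap is performed, I would first observe that for the shape $\mu=(\mu'_1,\ldots,\mu'_k,2^l)$ restricted to these two consecutive rows of length $2$, the local configuration is exactly a two-by-two block $\young(ab,AB)$ of the kind analyzed in those lemmas. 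The swap in Algorithm \ref{def:tau} replaces the lower row $(A,B)$ by $(B,A)$; I will therefore compare the pair $T'=\young(ab,AB)$ with the pair obtained after swapping.

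First I would make explicit which lemma governs which case. Lemma \ref{lemma:casexXxX} concerns precisely the situation where the two rows are in condition $xXxX$. I would invoke Lemma \ref{lemma:AdB}: whenever $A\neq B$, the configuration either satisfies $xXxX$ or falls under the hypotheses of Lemmas \ref{lemma:stabI} and \ref{lemma:stabII}. In the $xXxX$ branch I would apply Lemma \ref{lemma:stabxXxX} directly, which already gives $\numof{\Des_{2,1}(T)}=\numof{\Des_{2,1}(T')}$ and $\inv_{2,1}(T)=\inv_{2,1}(T')$ for the swap $T=\young(ab,AB)\mapsto T'=\young(ba,BA)$; translating the two-row notation to the rows $i$ and $i+1$ of the global filling yields the invariance of $\maj_{i+1,i}$ (since $\maj_{i+1,i}$ is $l$ times the leg-plus-one weight summed over $\Des_{i+1,i}$, and for rows of length $2$ the relevant count is controlled by $\numof{\Des_{i+1,i}}$) and of $\inv_{i+1,i}$. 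The remaining cases with $A\neq B$ are handled by the stability relations of Lemmas \ref{lemma:stabI} and \ref{lemma:stabII}, which assert equality of the descent counts, of $\Inv_{2,1}$, of $\Inv_{2}$, and of $\inv_{2,1}$ across the swap.

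The one place requiring genuine care is the degenerate case $A=B$, which is excluded from Lemma \ref{lemma:AdB}. Here I would argue separately: if the two lower entries coincide, then swapping them leaves the filling literally unchanged, so every local statistic is trivially preserved; thus the conclusion holds vacuously. I would also check that condition $xXxX$ cannot be satisfied when $A=B$ by inspecting the eight defining inequalities, each of which forces a strict separation between $A$ and $B$ on one side, so the $xXxX$-branch of the algorithm never triggers a genuine change in that degenerate subcase.

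I expect the main obstacle to be purely bookkeeping rather than conceptual: carefully matching the local two-row notation $\young(ab,AB)$ used in Lemmas \ref{lemma:stabI}--\ref{lemma:stabxXxX} with the indexed restricted statistics $\maj_{i+1,i}$ and $\inv_{i+1,i}$ on the full shape, and confirming that the $\maj_{i+1,i}$ contribution is governed entirely by $\numof{\Des_{i+1,i}}$ (because all cells in row $i+1$ share the same leg value relative to the rows above, so the factor $1+\leg(b)$ is constant across the two columns of the $2^l$-block). Once that identification is in place, the lemma follows by a case split on $A$ versus $B$ and direct citation of the three stability lemmas, with no further computation needed.
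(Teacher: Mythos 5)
Your overall strategy---reducing the claim to Lemmas \ref{lemma:stabI}, \ref{lemma:stabII} and \ref{lemma:stabxXxX} via Lemma \ref{lemma:AdB}, and observing that $\maj_{i+1,i}$ is a constant multiple of $\numof{\Des_{i+1,i}}$ because the leg is the same in both columns of the tail---is exactly the route the paper takes. But there is one point where your setup, read literally, fails. You say the swap ``replaces the lower row $(A,B)$ by $(B,A)$''; in fact Algorithm \ref{def:tau} swaps the \emph{upper} row of the pair (the $(i+1)$-th row sits above the $i$-th row in the French convention). More importantly, the statistics in question are \emph{not} preserved by swapping a single row of an $xXxX$ configuration: for $\young(13,24)$ (which satisfies $1\leq 2<3\leq 4$) one has $\numof{\Des_{2,1}}=0$, while swapping only the top row gives $\young(31,24)$ with $\numof{\Des_{2,1}}=1$, so $\maj_{2,1}$ changes. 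The reason Lemma \ref{lemma:stabxXxX}---which swaps \emph{both} rows---is the relevant lemma is that by the time the algorithm swaps row $i+1$, row $i$ has already been swapped at the preceding iteration (or at the initial $xAx$ step), so the net difference between $T$ and $\tau(T)$ on the rows $(i,i+1)$ is $\young(ab,AB)\mapsto\young(ba,BA)$; the statement to prove is $\inv_{i+1,i}(T)=\inv_{i+1,i}(\tau(T))$, not invariance under one elementary swap. You do cite Lemma \ref{lemma:stabxXxX} for precisely this two-row comparison, so your conclusion is right, but you never justify why both rows change, and your explicit description of the swap contradicts it. This bookkeeping is the actual content of the lemma and must be made explicit.

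Two smaller remarks. The case $A=B$ that you single out as ``the one place requiring genuine care'' is vacuous throughout: condition $xXxX$ forces the entries of each row to be distinct, and the only other pair the swap touches---the pair $(i+1,i+2)$ at which the while loop may terminate---has as its lower row a row that was just swapped, hence also has distinct entries, so Lemma \ref{lemma:AdB} always applies. Conversely, your invocation of Lemmas \ref{lemma:stabI} and \ref{lemma:stabII} really concerns that terminating pair (where only the lower row is swapped between $T$ and $\tau(T)$), not the pair $(i,i+1)$ named in the statement; the paper's own proof blurs this distinction too, but a clean write-up should treat the two pairs separately.
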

\begin{proof}
If the $i$-th and $(i+1)$-th row satisfy the condition $xXxX$,
then the values of the $(i+1)$-th row are different from each other. 
Using Lemma \ref{lemma:AdB}, we obtain that
the $i$-th and $(i+1)$-th row satisfy the condition $xXxX$
or the conditions of Lemma \ref{lemma:stabI} and \ref{lemma:stabII}.
Hence, it follows from
 Lemmas \ref{lemma:stabI}, \ref{lemma:stabII} and \ref{lemma:stabxXxX}
that 
$$\inv_{i+1,i}(T)=\inv_{i+1,i}(\tau(T))\ .$$
The lemmas also imply
$\numof{\Des_{i+1,i}(T)}=\numof{\Des_{i+1,i}(\tau(T))}$.
In this case, 
$$\maj_{i+1,i}(T) =(k+l-i)\numof{\Des_{i+1,i}(T)} \quad 
\text{and} \quad \maj_{i+1,i}(\tau(T)) =(k+l-i)\numof{\Des_{i+1,i}(\tau(T))}\ .$$
Finally, \vspace{-0.2cm} 
$$\maj_{i+1,i}(T)=\maj_{i+1,i}(\tau (T))\ .$$
\end{proof}

Now we can finish the proof of Theorem \ref{ThDM}. 
Lemmas \ref{lemma:casexAx}, \ref{lemma:casenotxAx}
and \ref{lemma:casexXxX} 
imply the second statement of the theorem
$$\inv(\pi^{\ast}(T))=\inv(\tau(T))\ .$$
It also follows from these lemmas that 
$$\maj(\pi^{\ast}(T))+l\cdot\numof{\Des_{k+1,k}(T)}=\maj(\tau(T))\ .$$
which implies the first statement on statistic $maj$
$$\maj(\pi^{\ast}(T))\equiv(\tau(T)) \pmod{l}\ .$$
\begin{remark}
We can mention that the $(q,t)$-Kostka polynomials
 $K_{\lambda,\mu}(q,t)$ (coefficient of the expansion of the modified
 Macdonald polynomials on the Schur basis) for the special case of two
 columns partitions $\mu=(2^r1^{n-2r})$ have been studied in \cite{Stem}
 and combinatorially interpreted with rigged configurations  in
 \cite{Fishel}. An other approach using statistics on Young tableaux has been developed in \cite{Z} and \cite{LM}.
\end{remark}

\end{document}